\colorlet{refkey}{pink!90!red}
\colorlet{labelkey}{JungleGreen!80!yellow}
\newtheorem{theorem}{Theorem}[section]
\crefname{theorem}{Theorem}{Theorems}
\crefname{proposition}{Proposition}{Propositions}
\crefname{lemma}{Lemma}{Lemmas}
\crefname{corollary}{Corollary}{Corollaries}
\theoremstyle{definition}
\newtheorem{remark}{Remark}[section]
\crefname{remark}{Remark}{Remarks}
\newtheorem{definition}{Definition}
\crefname{definition}{Definition}{Definitions}
\newtheorem{scheme}{Scheme}
\crefname{scheme}{Scheme}{Schemes}
\crefname{section}{Section}{Sections}
\numberwithin{equation}{section}
\numberwithin{equation}{section}
\setlist[enumerate,1]{label=(\arabic*)}
\DeclareMathOperator{\cn}{cn}
\DeclarePairedDelimiterXPP{\inner}[2]{}{\langle}{\rangle}{}{#1,#2}
\DeclarePairedDelimiter{\norm}{\lVert}{\rVert}
\DeclarePairedDelimiter{\paren}{\lparen}{\rparen}
\DeclarePairedDelimiter{\bracket}{\lbrack}{\rbrack}
\DeclarePairedDelimiter{\jump}{\lbrack}{\rbrack}
\providecommand\given{}
\newcommand\SetSymbol[1][]{%
    \nonscript\:#1\vert
    \allowbreak
    \nonscript\:
    \mathopen{}
}
\DeclarePairedDelimiterX{\Set}[1]{\{}{\}}{%
    \renewcommand\given{\SetSymbol[\delimsize]}
    #1
}
\newcommand{\bR}{\mathbb{R}}
\newcommand{\bN}{\mathbb{N}}
\newcommand{\rd}{\mathrm{d}}
\newcommand{\cP}{\mathcal{P}}
\newcommand{\cJ}{\mathcal{J}}
\newcommand{\cJt}{\cJ_\tau}
\title{Higher order discrete gradient method by the discontinuous Galerkin time-stepping method}
\author{    
    Tomoya Kemmochi%
    \thanks{%
        Corresponding author. 
        Graduate School of Engineering, Nagoya University. 
        \\
        Email: \texttt{kemmochi@na.nuap.nagoya-u.ac.jp}, Web: \url{https://t-kemmochi.github.io/en/}
    }
}
\begin{document}
\mleftright

\maketitle

\begin{abstract}
    Many differential equations with physical backgrounds are described as gradient systems, which are evolution equations driven by the gradient of some functionals, and such problems have energy conservation or dissipation properties.
    For numerical computation of gradient systems, numerical schemes that inherit the energy structure of the equation play important roles, which are called structure-preserving.
    The discrete gradient method is one of the most classical framework of structure-preserving methods, which is at most second order accurate.
    In this paper, we develop a higher-order structure-preserving numerical method for gradient systems, which includes the discrete gradient method.
    We reformulate the gradient system as a coupled system and then apply the discontinuous Galerkin time-stepping method.
    Numerical examples suggests that the order of accuracy of our scheme is $(k+1)$ in general and $(2k+1)$ at nodal times, where $k$ is the degree of polynomials.
\end{abstract}

\section{Introduction}

Let $H$ be a real or complex Hilbert space with the inner product $\inner{\cdot}{\cdot}$ and $E$ be a real-valued energy functional defined on an open subset $U \subset H$.
Then, we consider an evolution equation
\begin{equation}
    \begin{cases}
        \dot{u}(t) = L(u(t)) \nabla E(u(t)), & 0 < t \le T, \\
        u(0) = u_0,        
    \end{cases}
    \label{eq:gs}
\end{equation}
where $T>0$ and $u_0 \in H$ are given, $u \colon [0,T] \to H$ is an unknown function, $L$ is a linear operator on $H$ that may be unbounded and may depend on $u$, and $\nabla E \colon U \to H$ is the Fr\'echet derivative of $E$.
In this paper, we call the equation \eqref{eq:gs} the gradient system as it includes gradient flows.
For the solution of the gradient system \eqref{eq:gs}, one obtains
\begin{equation}
    \frac{d}{dt} E[u(t)] = \inner{\nabla E(u)}{\dot{u}} = \inner{\nabla E(u)}{L(u) \nabla E(u)}
    \label{eq:energy}
\end{equation}
by the chain rule.
This property is called the energy structure since it implies the energy conservation if $L$ is skew-symmetric and the energy dissipation if $L$ is negative semi-definite.
Many differential equations with physical backgrounds are described as the gradient system \eqref{eq:gs}.
Typical examples are the Allen--Cahn, Cahn--Hilliard, KdV, and nonlinear Schr\"odinger equations.
For these equations, the energy structure \eqref{eq:energy} reflects certain physical properties.

% structure-preserving methods
For numerical computation of gradient systems, structure-preserving numerical methods, which are numerical methods that inherit energy structure of the original equations, play important roles.
It is known that structure-preserving methods not only provide physically reasonable numerical solutions, but also are robust for long-time computations.
Therefore a lot of structure-preserving schemes have been developed in recent decades, such as the discrete gradient method \cite{MR1411343}, the discrete variational derivative method \cite{MR1727636,MR2744841}, the discrete partial derivative method \cite{MR2437123}, the scalar auxiliary variable approach \cite{MR3723659,MR3989239,MR4462611}, and so on (see also \cite{MR2221614}).
Most of these methods are at most second order accurate.

The discrete gradient method is a classical framework to construct structure-preserving methods.
The concept of the discrete gradient plays a crucial role, which is a map $\nabla_\rd E \colon U \times U \to H$ defined through the discrete counterpart of the chain rule 
\begin{equation}
    E[u] - E[v] = \inner{\nabla_\rd E(u,v)}{u-v}
    \label{eq:discrete-gradient-intro}
\end{equation}
for $u,v \in U$ (precise definition will be given in~\cref{def:dg}).
The scheme of the discrete gradient method is then given by 
\begin{equation}
    \frac{u^n - u^{n-1}}{\tau} = L(\bar{u}^n) \nabla_\rd E(u^n,u^{n-1}),
\end{equation}
where $\tau>0$ is the time increment, $u^n \approx u(n \tau)$, and $\bar{u}^n$ is arbitrary approximation of $u^n$ or $u^{n-1}$.
By the definition of the discrete gradient \eqref{eq:discrete-gradient-intro}, it is easy to see that
\begin{equation}
    E[u^n] - E[u^{n-1}] = \tau \inner*{\nabla_\rd E(u^n,u^{n-1})}{L(\bar{u}^n) \nabla_\rd E(u^n,u^{n-1})}
    \label{eq:energy-dgm}
\end{equation}
holds, which corresponds to the energy structure \eqref{eq:energy}.
Notice that the construction of the discrete gradient is not unique.
The averaged vector field method \cite{MR1694701} is one of the unified approaches for construction.

% Aim
% - discrete gradient method
% - DG time-stepping method
The aim of this paper is to develop a higher-order structure-preserving numerical method for \eqref{eq:gs}.
% Here, a numerical method is structure-preserving if it inherits the energy structure \eqref{eq:energy}.
We focus on the discontinuous Galerkin (DG) time-stepping method \cite{MR826227,MR2249024} to achieve this purpose.
The DG time-stepping method is a temporal discretization method based on the discontinuous Galerkin method in the temporal direction. 
For parabolic problems, many computational and theoretical aspects are studied (see~\cite{MR826227,MR1620144,MR2249024} and references therein).
In particular, it is known that the accuracy at $t=t_n$ is $O(\tau^{2k+1})$ and the accuracy in $J_n$ is $O(\tau^{k+1})$, where $k$ is the degree of polynomials (see also \cite{MR2983919}).

The main idea of our scheme is to reformulate the gradient system \eqref{eq:gs} as a coupled system
\begin{equation}
    \begin{cases}
        \dot{u} = L(u) p, & \text{in } (0,T], \\
        p = \nabla E(u), & \text{in } (0,T], \\
        u(0) = u_0.        
    \end{cases}
    \label{eq:gs-coupled}
\end{equation}
We then apply the standard DG times-stepping method of degree $k$ to the first equation, and a non-standard DG method with test functions of degree $k-1$ to the second equation.
Consequently, there remains one degree of freedom (for the temporal variable), and thus we add an extra equation by using the discrete gradient (see~\cref{scheme:main}).
Since our scheme is based on the Galerkin method, it is easy to apply the scheme to partial differential equations.

We will show that our scheme inherits the energy structure \eqref{eq:energy} (\cref{thm:energy-proposed}).
Moreover, if we use the piecewise constant functions, our scheme coincides with the classical discrete gradient method.
Therefore, the proposed scheme is a higher-order extension of the discrete gradient method.

% higher-order structure-preserving methods
There are several studies on higher-order structure-preserving methods.
The higher-order discrete gradient method is presented and discussed in \cite{MR3094567,MR4512604}.
In these studies, higher-order discrete gradients are explicitly constructed.
The averaged vector field collocation method \cite{MR2833602,MR2784654,MR3194795} is also a powerful arbitrary higher-order method.
It is pointed out in \cite{MR2983919} that this method is interpreted as a temporal finite element method with continuous trial functions and discontinuous test functions.
Hence it resembles to our scheme. This point will be discussed by numerical experiments.
% The averaged vector field collocation method is also interpreted as a continuous stage Runge--Kutta method (see \cite{MR2833602}).
The continuous stage Runge--Kutta method for conservative systems is discussed in \cite{MR3259826,MR3516866}, and the authors present conditions the conservation laws.
Recently, the DG time-stepping method for dissipative systems is investigated in \cite{MR3987168}.
In this study, the energy dissipative scheme is developed when the energy $E$ is convex.
The current paper can be viewed as a modification of \cite{MR3987168}.

The remainder of the present paper is organized as follows.
In Section~\ref{sec:dgm}, we review the classical discrete gradient method.
Then, we propose the novel scheme in Section~\ref{sec:proposed}.
Numerical examples are given in Section~\ref{sec:numerical} to see the performance of the scheme.
Finally we conclude this paper with some remarks in Section~\ref{sec:conclusion}.

\section{Discrete gradient method} 
\label{sec:dgm}

In this section, we review the classical discrete gradient method according to \cite{MR1411343}.
We first introduce the discrete gradient.
\begin{definition}\label{def:dg}
    Let $E \colon U \to \bR$ is a functional defined on an open subset $U \subset H$.
    Then, a map $\nabla_\rd E \colon U \times U \to H$ is called the \emph{discrete gradient} if the following two properties are satisfied:
    \begin{enumerate}[label=(\roman*)]
        \item $E[u] - E[v] = \inner{\nabla_\rd E(u,v)}{u-v}$ for all $u,v \in U$,
        \item $\nabla_\rd E(u,u) = \nabla E(u)$ for all $u \in H$.
    \end{enumerate}
\end{definition}

Once the discrete gradient is constructed, the scheme of the discrete gradient method for the gradient system \eqref{eq:gs} is given by, for example, 
\begin{equation}
    \frac{u^n - u^{n-1}}{\tau} = L\paren*{\frac{u^n + u^{n-1}}{2}} \nabla_\rd E(u^n,u^{n-1}),
\end{equation}
where $u^n \approx u(n\tau)$, and then the discrete energy structure \eqref{eq:energy-dgm} holds.
This scheme is at most second order.

The construction of the discrete gradient is not unique.
We here give three examples, which lead to at most second order schemes.

\subsection*{Discrete gradients of Gonzalez}
Gonzalez \cite{MR1411343} proposed the discrete gradient
\begin{equation}
    \nabla_\rd E(u,v) \coloneqq \nabla E \paren*{\frac{u+v}{2}} + \frac{E[v] - E[u] - \inner*{\nabla E \paren*{\frac{u+v}{2}}}{v-u}}{\norm{v-u}^2}(v-u),
    \label{eq:Gonzalez}
\end{equation}
for $u,v \in H, \, u \ne v$.

\subsection*{Averaged vector field}
The averaged vector field \cite{MR1694701} is given by
\begin{equation}
    \nabla_\rd E(u,v) \coloneqq \int_0^1 \nabla E( (1-s)u + sv ) ds.
    \label{eq:AVF}
\end{equation}

\subsection*{Discrete gradients of Itoh--Abe}
When $H = \bR^N$ and $E[u] = E(u_1,\dots,u_N)$, the discrete gradient of Itoh--Abe \cite{MR0943488} is given by
\begin{equation}
    \nabla_\rd E(u,v)_j \coloneqq \frac{E(v_1,\dots,v_{j-1},u_j,u_{j+1},\dots,u_N) - E(v_1,\dots,v_{j-1},v_j,u_{j+1},\dots,u_N)}{u_j - v_j}
    \label{eq:IA}
\end{equation}
for $j=1,\dots,N$.

\section{Proposed scheme}
\label{sec:proposed}

In this section, we present our novel higher-order structure-preserving scheme for the gradient system \eqref{eq:gs} based on the DG time-stepping method.
We first introduce the space of piecewise polynomials.
Let $0 = t_0 < t_1 < \dots < t_N = T$ be a temporal mesh and $J_n = (t_{n-1},t_n]$ be a subinterval for $n=1,\dots,N$, and set $\tau_n = |J_n|$, $\tau = \max_n \tau_n$, and $\cJt = \{ J_n \}_{n=1}^N$.
Then, we define the space of $H$-valued piecewise polynomials of degree $k$ by 
\begin{align}
    \cP^k(J_n;H) &\coloneqq \Set*{ \sum_{j=0}^k v_j t^j \given v_j \in H, \forall j, \, t \in J_n}, \\ 
    V_\tau = V_\tau(H) &\coloneqq \Set*{ v \in L^2(0,T; H) \given v|_{J_n} \in \cP^k(J_n;H), \forall n }.
\end{align}
For $v \in V_\tau$, we define the nodal values and the jump by 
\begin{equation}
    v^n \coloneqq v(t_n), \quad v^{n,+} \coloneqq \lim_{t \downarrow t_n} v(t), \quad \jump{v}^n \coloneqq v^{n,+} - v^n.
\end{equation}

We now propose our novel scheme.
We reformulate the gradient system \eqref{eq:gs} as
\begin{equation}
    \begin{cases}
        \dot{u}(t) = L(u(t)) p(t), & 0 < t \le T, \\
        p(t) = \nabla E(u(t)), & 0 < t \le T, \\
        u(0) = u_0.        
    \end{cases}
    \label{eq:gs-2}
\end{equation}
Then, our scheme is formulated as follows.

\begin{scheme}\label{scheme:main}
    Find $(u_\tau,p_\tau) \in V_\tau \times V_\tau$ that satisfies $u_\tau^0 = u_0$ and
    \begin{equation}
        \begin{dcases}
            \int_{J_n} \inner{\dot{u}_\tau}{v} dt + \inner*{\jump{u_\tau}^{n-1}}{v^{n-1,+}} = \int_{J_n} \inner{L(u_\tau) p_\tau}{v} dt, & \forall v \in \cP^k(J_n;H), \\
            \int_{J_n} \inner{p_\tau}{w} dt = \int_{J_n} \inner{\nabla E(u_\tau)}{w} dt, & \forall w \in \cP^{k-1}(J_n;H), \\
            p^{n-1,+}_\tau = \nabla_\rd E(u_\tau^{n-1,+}, u_\tau^{n-1})
        \end{dcases}
        \label{eq:proposed}
    \end{equation}
    for $n=1,2,\dots,N$. Here, $\nabla_\rd E$ is the discrete gradient in the sense of \cref{def:dg}.
\end{scheme}

\begin{remark}\label{rem:extension}
    When $k=0$, the scheme can be viewed as the classical discrete gradient method.
    Indeed, since $u_\tau|_{J_n} \equiv u^n = u^{n-1,+}$ when $k=0$, we have
    \begin{equation}
        \begin{dcases}
            \inner*{u_\tau^n - u_\tau^{n-1}}{v} = \tau_n \inner{L(u_\tau^n) p_\tau^n}{v}, & \forall v \in H, \\
            p^n_\tau = \nabla_\rd E(u_\tau^n, u_\tau^{n-1}),
        \end{dcases}
    \end{equation}
    which is equivalent to 
    \begin{equation}
        \frac{u_\tau^n - u_\tau^{n-1}}{\tau_n} = L(u_\tau^n) \nabla_\rd E(u_\tau^n, u_\tau^{n-1}).
    \end{equation}
    In particular, when $L$ is independent of $u$, this coincides with the discrete gradient method.
\end{remark}

Considering the nodal values $\{ u_\tau^n \}_{n=0}^N$, we regard the DG scheme \eqref{eq:proposed} as a one-step method,
and this inherits the energy structure of \eqref{eq:gs}.

\begin{theorem}\label{thm:energy-proposed}
    Assume that the numerical scheme \eqref{eq:proposed} has a unique solution and let $(u_\tau,p_\tau) \in V_\tau \times V_\tau$ be the solution.
    Then, we have
    \begin{equation}
        E[u_\tau^n] - E[u_\tau^{n-1}] = \int_{J_n} \inner{L(u_\tau) p_\tau}{p_\tau} dt
        \label{eq:energy-proposed}
    \end{equation}
    for each $n$. 
    In particular, if $L(v)$ is skew-symmetric (resp.\ negative semi-definite) for all $v \in H$, then the energy conservation $E[u_\tau^n] = E[u_\tau^{n-1}]$ (resp.\ the energy dissipation $E[u_\tau^n] \le E[u_\tau^{n-1}]$) holds.
\end{theorem}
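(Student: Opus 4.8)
The plan is to convert the two variational identities in \eqref{eq:proposed} into a \emph{discrete chain rule} by inserting the right polynomial test functions, the admissible choices being exactly what the degree pairing of the scheme was designed to permit. Fix $n$ and split
\[
E[u_\tau^n] - E[u_\tau^{n-1}] = \bigl(E[u_\tau^n] - E[u_\tau^{n-1,+}]\bigr) + \bigl(E[u_\tau^{n-1,+}] - E[u_\tau^{n-1}]\bigr).
\]
On $[t_{n-1},t_n]$ the trial function $u_\tau$ is a polynomial, hence $C^1$, with values in $U$, so the ordinary chain rule gives $E[u_\tau^n] - E[u_\tau^{n-1,+}] = \int_{J_n}\inner{\nabla E(u_\tau)}{\dot u_\tau}\,dt$. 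For the jump contribution, property~(i) of the discrete gradient in \cref{def:dg} combined with the third line of \eqref{eq:proposed} yields
\[
E[u_\tau^{n-1,+}] - E[u_\tau^{n-1}] = \inner{\nabla_\rd E(u_\tau^{n-1,+},u_\tau^{n-1})}{u_\tau^{n-1,+}-u_\tau^{n-1}} = \inner{p_\tau^{n-1,+}}{\jump{u_\tau}^{n-1}}.
\]

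Next I would eliminate $\nabla E(u_\tau)$ in favour of $p_\tau$. Since $u_\tau|_{J_n}\in\cP^k(J_n;H)$, its derivative satisfies $\dot u_\tau|_{J_n}\in\cP^{k-1}(J_n;H)$, so $\dot u_\tau$ is an admissible test function $w$ in the second line of \eqref{eq:proposed}; taking $w=\dot u_\tau$ gives $\int_{J_n}\inner{\nabla E(u_\tau)}{\dot u_\tau}\,dt=\int_{J_n}\inner{p_\tau}{\dot u_\tau}\,dt$. Adding the two displays above,
\[
E[u_\tau^n] - E[u_\tau^{n-1}] = \int_{J_n}\inner{\dot u_\tau}{p_\tau}\,dt + \inner{\jump{u_\tau}^{n-1}}{p_\tau^{n-1,+}}.
\]
Now $p_\tau|_{J_n}\in\cP^k(J_n;H)$ is itself an admissible test function $v$ in the first line of \eqref{eq:proposed}; choosing $v=p_\tau$ shows that the right-hand side equals $\int_{J_n}\inner{L(u_\tau)p_\tau}{p_\tau}\,dt$, which is \eqref{eq:energy-proposed}. (In the complex case one reads every inner product above as its real part, consistently with \eqref{eq:energy}; since $\operatorname{Re}\inner{\cdot}{\cdot}$ is symmetric, nothing else changes.)

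The ``in particular'' statements then follow pointwise in $t$: if $L(v)$ is skew-symmetric for all $v\in H$ then $\inner{L(u_\tau(t))p_\tau(t)}{p_\tau(t)}=0$, so the integral vanishes and $E[u_\tau^n]=E[u_\tau^{n-1}]$; if $L(v)$ is negative semi-definite then the integrand is $\le 0$, whence $E[u_\tau^n]\le E[u_\tau^{n-1}]$. I do not expect a genuine obstacle: this is the familiar discrete-gradient energy computation, and the only point requiring care is the bookkeeping of the jump term at $t_{n-1}$ together with the matching of polynomial degrees — it is precisely so that $\dot u_\tau$ is legal in the $p$-equation and $p_\tau$ is legal in the $u$-equation that the scheme couples degree-$k$ trial functions with degree-$(k-1)$ test functions in the second equation and carries the discrete gradient in the third. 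One should also keep in mind the implicit regularity hypothesis that the computed $u_\tau$ stays in $U$ on each $\overline{J_n}$, which is what licenses the chain-rule step.
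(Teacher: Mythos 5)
Your proposal is correct and is essentially the paper's own argument: both proofs rest on the same two test-function choices ($v=p_\tau$ in the first equation, $w=\dot u_\tau$ in the second), the exact chain rule on the interior of $J_n$, and property~(i) of \cref{def:dg} applied to the jump at $t_{n-1}$ via the third equation. The only difference is presentational (you start from the energy difference and work toward the scheme, the paper goes the other way), and your side remarks on the real-part convention and on $u_\tau$ staying in $U$ are reasonable points the paper leaves implicit.
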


\begin{proof}
    Letting $v=p_\tau$ in the first equation of \eqref{eq:proposed} and using the third equation, we have
    \begin{align}
        \int_{J_n} \inner{\dot{u}_\tau}{p_\tau} dt 
        &= \int_{J_n} \inner{L(u_\tau) p_\tau}{p_\tau} dt - \inner*{\jump{u_\tau}^{n-1}}{p_\tau^{n-1,+}} \\
        &= \int_{J_n} \inner{L(u_\tau) p_\tau}{p_\tau} dt - \inner*{\jump{u_\tau}^{n-1}}{\nabla_\rd E(u_\tau^n, u_\tau^{n-1})} \\
        &= \int_{J_n} \inner{L(u_\tau) p_\tau}{p_\tau} dt - \paren*{E[u_\tau^{n-1,+}] - E[u_\tau^{n-1}]}.
    \end{align}
    Moreover, letting $w = \dot{u}_\tau$ in the second equation of \eqref{eq:proposed}, we have
    \begin{equation}
        \int_{J_n} \inner{p_\tau}{\dot{u}_\tau} dt = \int_{J_n} \inner*{\nabla E(u_\tau)}{\dot{u}_\tau} dt = E[u^n_\tau] - E[u_\tau^{n-1,+}].
    \end{equation}
    These two equations implies the desired identity \eqref{eq:energy-proposed}.
\end{proof}

\begin{remark}
    In the above theorem, we assumed that the scheme \eqref{eq:proposed} has a unique solution.
    However, we do not address this issue in the current paper and postpone it as future work.
\end{remark}

\section{Numerical examples}
\label{sec:numerical}

\subsection{Scalar ODE}
We consider the simple scalar ODE 
\begin{equation}
    \frac{du}{dt} = u-u^3 = -f'(u), \quad f(u) = \frac{(1-u^2)^2}{4}
    \label{eq:scalar-ode}
\end{equation}
with the corresponding energy $E[u] = f(u)$ to see the energy dissipation and convergence rate.
In this case, the three of the discrete gradients \eqref{eq:Gonzalez}, \eqref{eq:AVF}, and \eqref{eq:IA} coincide and one has
\begin{equation}
    \nabla_\rd E(u,v) = \frac{u^3 + u^2v + uv^2 + v^3}{4} - \frac{u+v}{2} .
\end{equation}
Moreover, the exact solution is given by
\begin{equation}
    u(t) = \frac{u_0}{\sqrt{(1-e^{-2t})u_0^2 + e^{-2t}}}
\end{equation}
if $u(0) = u_0 > 0$.

\begin{figure}
    \centering
    \includegraphics[page=1,width=0.32\linewidth]{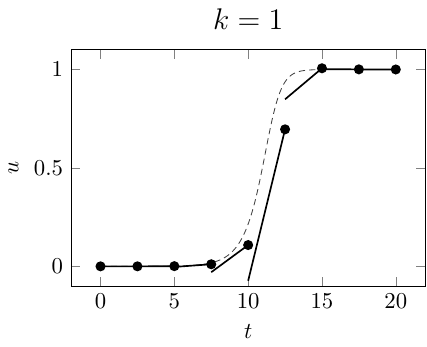}
    \includegraphics[page=2,width=0.32\linewidth]{ode.pdf}
    \includegraphics[page=3,width=0.32\linewidth]{ode.pdf}
    \caption{Numerical examples of the scheme \eqref{eq:proposed} for the ODE \eqref{eq:scalar-ode} when $k=1,2,3$. The black lines are the numerical solutions, the black dots are nodal values, and the gray dashed lines are the exact solution.}
    \label{fig:scalar-ode}
\end{figure}

We set the initial value $u_0 = 10^{-5}$, the uniform time increment $\tau_n \equiv \tau = 20/8$, and the final time $T=20$.
We computed the above ODE by our scheme \eqref{eq:proposed} for $k=1,2,3$.
The results are plotted in Figure~\ref{fig:scalar-ode}.
Although the time increment $\tau$ is very big, our scheme works well.
Moreover, energy dissipation $E[u^n] \le E[u^{n-1}]$ is observed.
Because of the discontinuity, energy dissipation does not hold at every time.
In particular, one observe that $E[u^n] > E[u^{n,+}]$ holds when $k=1$.

\begin{figure}
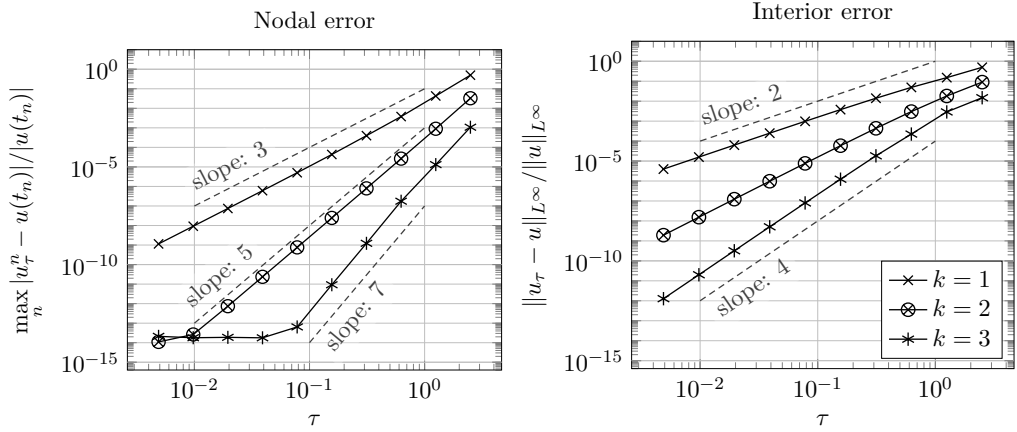

    \centering
    \includegraphics[page=4,width=0.45\linewidth]{ode.pdf}
    \includegraphics[page=5,width=0.45\linewidth]{ode.pdf}
    \caption{Convergence rate of the scheme \eqref{eq:proposed} for the ODE \eqref{eq:scalar-ode} when $k=1,2,3$.
    The left one shows the nodal errors and the right one shows the interior errors.}
    \label{fig:scalar-ode-rate}
\end{figure}

The relative errors with variable time increments are plotted in Figure~\ref{fig:scalar-ode-rate}.
We plotted the nodal error $\max_n | u^n_\tau - u(t_n)|/|u(t_n)|$ and the interior error $\| u_\tau - u\|_{L^\infty(0,T)}/\|u\|_{L^\infty(0,T)}$. 
The result suggests that the nodal error is $O(\tau^{2k+1})$ and the interior error is $O(\tau^{k+1})$, which coincides with the convergence rate of the usual DG time-stepping method.

\subsection{KdV equation}

Let us consider the PDE case.
We consider the KdV equation
\begin{equation}
    \partial_t u + 6u\partial_x u + \partial_{xxx} u = 0, \qquad 
    \text{in } (0,L) \times (0,T)
    \label{eq:kdv}
\end{equation}
with the periodic boundary condition.
The corresponding energy is 
\begin{equation}
    E[u] = \int_0^L \paren*{ \frac{1}{2}u_x^2 - u^3 } dx 
\end{equation}
and the gradient is $\nabla E(u) = -u_{xx} -3u^2$.
Therefore, the KdV equation is reformulated as
\begin{equation}
    \partial_t u = p, \qquad p = -u_{xx} -3u^2.
\end{equation}
When the initial function is 
\begin{equation}
    u_0(x) = \alpha + 2\kappa^2 m^2 \cn^2 (\kappa x | m),
\end{equation}
the exact solution is the cnoidal wave
\begin{equation}
    u_\text{ex}(x,t) = \alpha + 2\kappa^2 m^2 \cn^2 (\kappa(x-ct) | m), \qquad c = 6\alpha + 4(2m^2-1)\kappa^2
\end{equation}
with spatial period $L \coloneqq 2K(m)/\kappa$ and temporal period $L/|c|$,
where $\cn$ is one of the Jacobi elliptic functions and $K(m)$ is the complete elliptic integral of the first kind.

We use the conforming finite element method with degree $l$ for the spatial discretization.
We state the complete scheme, let $N_x \in \bN$, $h = L/N_x$, $x_j = jh$, and $\Omega_j = (x_{j-1},x_j)$.
Then, define the conforming $P^l$ finite element space with periodicity by
\begin{equation}
    S_h \coloneqq \Set{ v_h \in H^1(0,L) \given v_h|_{\Omega_j} \in \cP^l(\Omega_j; \bR),\, v_h(0) = v_h(L) }.
\end{equation}
We finally set $V_{\tau,h} \coloneqq V_\tau(S_h)$. 
That is, we set $H = S_h$ with the $L^2$-inner product.

Then, our scheme \eqref{eq:proposed} for the KdV equation \eqref{eq:kdv} is given as follows.
Find $(u,p) \in V_{\tau,h} \times V_{\tau,h}$ such that
\begin{equation}
    \begin{dcases}
        \int_{J_n} (\tau_t u, v) dt + (\jump{u}^{n-1} ,v^{n-1,+}) = \int_{J_n} (p_x, v) dt, & \forall v \in \cP^k(J_n; S_h), \\
        \int_{J_n} (p,w) dt = \int_{J_n} \bracket*{ (u_x, w_x) - (3u^2, w)  } dt, & \forall w \in \cP^{k-1}(J_n; S_h), \\
        (p^{n-1,+}, q) = (\nabla_\rd E(u^{n-1}, u^{n-1,+}), q) , & \forall q \in S_h,
    \end{dcases}
    \label{eq:scheme-kdv}
\end{equation}
where $(\cdot,\cdot)$ is the usual $L^2$-inner product over $(0,L)$.
Here, we define the discrete gradient by
\begin{equation}
    (\nabla_\rd E(v,\bar{v}), q) \coloneqq \paren*{ \frac{v_x+\bar{v}_x}{2}, q_x } - \paren*{ \frac{v^2 + v\bar{v} + \bar{v}^2}{3}, q }, \qquad \forall q \in S_h
\end{equation}
for $v,\bar{v} \in S_h$.

We set the parameters as follows:
\begin{equation}
    m = \sqrt{0.9}, \quad \kappa = 1, \quad \alpha = 0.
\end{equation}
Moreover, we set the degree of spatial polynomials by $l=2k$, we used the uniform temporal mesh $t_n = n\tau$ with $\tau = T/N_t$,
and we set $N_t=N_x=2^i$ with $i \in \bN$.
Then we computed the KdV equation for one period and calculated the $L^\infty$-error at nodal times $\max_n \| u^n(\cdot) - u_\text{ex}(t_n) \|_{L^\infty(0,L)}/\| u_\text{ex}(\cdot,t_n) \|_{L^\infty(0,L)}$ for variable $i$.

\begin{figure}
    \centering
    \includegraphics[width=0.5\linewidth]{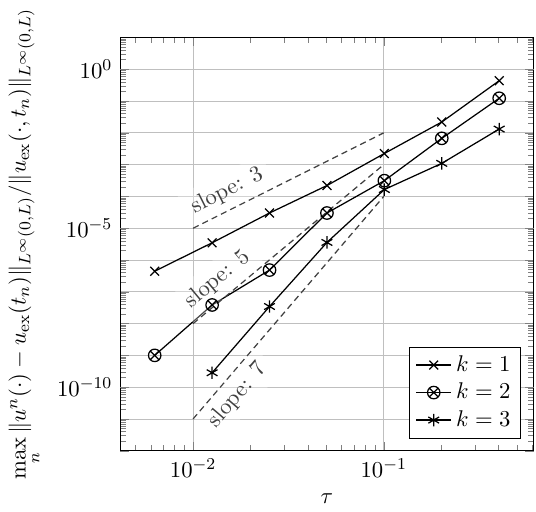}
    \caption{Numerical results of \eqref{eq:scheme-kdv} for $k=1,2,3$.}
    \label{fig:kdv}
\end{figure}

The result is plotted in Figure~\ref{fig:kdv}.
This suggests that the nodal accuracy is still $O(\tau^{2k+1})$ even in the PDE case.

\section{Concluding remarks}
\label{sec:conclusion}

In this paper, we proposed a higher-order structure-preserving method for the gradient system \eqref{eq:gs} by combining the discrete gradient method and the discontinuous Galerkin time-stepping method.
Our scheme \eqref{eq:proposed} inherits the energy structure in the sense of \eqref{eq:energy-proposed}.
Since the scheme is based on the Galerkin method, it is easy to apply our scheme to PDEs.
Numerical results suggest that the temporal accuracy of the scheme is $O(\tau^{2k+1})$ at nodal times and $O(\tau^{k+1})$ at interior times.
However, theoretical aspects of the scheme, such as the well-posedness and error estimates, are not addressed in the present paper.
Moreover, relationship between our scheme and the existing schemes is not trivial.
These issues should be addressed in the future study.

\bibliographystyle{plain}
\bibliography{reference}
\end{document}